\newcommand{\norm}[1]{\ensuremath{\left\|#1\right\|}}
\newcommand{\abs}[1]{\ensuremath{\left\vert#1\right\vert}}
\newcommand{\avg}[1]{\langle #1 \rangle}
\newcommand{\one}{\mathbbm{1}}
\newcommand{\eps}{\varepsilon}
\newcommand{\m}[1]{\mathcal{#1}}
\newcommand{\md}{\mathcal{D}}
\newcommand{\unit}{1\!\!1}
\renewcommand{\one}{1\!\!1}
\numberwithin{equation}{section}
\newtheorem{thm}{Theorem}[section]
\newtheorem{lm}[thm]{Lemma}
\newtheorem*{prop*}{Proposition}
\theoremstyle{remark}
\newtheorem*{rem*}{Remark}
\numberwithin{equation}{section}
\title[Two--Weight Bounds for Sparse Operators]{Entropy Bumps and Another Sufficient Condition for the Two--Weight
  Boundedness of Sparse Operators}
\subjclass[2010]{42B20, 42B25}
\keywords{Sparse Operator, Weighted Inequalities,
    Entropy Bumps, Calder\'on--Zygmund Operator}
\author[R. Rahm]{Robert Rahm}
\address{Robert Rahm, School of Mathematics\\ Washington University in St. Louis
    \\ One Brookings Drive\\ St. Louis, MO USA 63112}
\email{robertrahm@gmail.com}
\author[S. Spencer]{Scott Spencer}
\address{Scott Spencer, School of Mathematics\\ Georgia Institute of Technology\\ 686 Cherry 
    Street\\ Atlanta, GA USA 30332-0160}
\email{spencer@math.gatech.edu}
\begin{document}

\begin{abstract}
In this short note, we give a very efficient proof of a recent 
result of Treil--Volberg and Lacey--Spencer giving sufficient conditions 
for the two--weight boundedness of a sparse operator. We also give a 
new sufficient condition for the two--weight boundedness of a sparse operator. 
We make critical use of a formula of Hyt\"onen in \cite{Hyt}.
\end{abstract}

\maketitle

\section{Introduction}
%
%
Let $\mathcal{D}$ be a dyadic lattice. Recall that a collection $\mathcal{S}$ of 
cubes in $\mathcal{D}$ is said to be sparse if the following holds, 
uniformly over $P\in\mathcal{D}$: $\abs{\cup_{Q\in\mathcal{S}:S\subset P}Q}
\leq \frac{1}{2}\abs{P}$. This implies that the following holds, uniformly over 
all $P\in\mathcal{D}$:
$
\sum_{Q\in\mathcal{S}:S\subset P}\abs{Q}
\lesssim \abs{P}.
$ For a cube $Q\in\mathcal{S}$, let $E_Q:=Q\setminus \cup_{S\in\mathcal{S}:S\subset Q} S$
and note that $\abs{E_Q}\simeq\abs{Q}$. 
The sparse operator $T_\mathcal{S}$ indexed over cubes in $\mathcal{S}$ is defined by
$
T_\mathcal{S}f(x):=\sum_{S\in\mathcal{S}}\avg{f}_{S}\unit_S(x).
$
Here and below, $\avg{f}_{Q}:=\abs{Q}^{-1}\int_{Q}f(x)dx$.

Due to deep and important theorems of Lerner, Lacey, and Rey and Conde--Alonso 
\cites{Ler,Lac,ConRey} important operators in harmonic analysis (for example, maximal 
functions, Calder\'on--Zygmund Operators, Haar shifts) are point wise 
dominated by finite sums of sparse operators. Thus, proving two--weight inequalities 
for these sparse operators will imply the same theorems for other operators of 
interest. 

Recently, a sufficient condition for the two--weight boundedness of sparse 
operators was provided by Treil--Volberg and Lacey--Spencer \cites{TreVol,LacSpe}. 
The conditions are in terms of so--called ``entropy bumps'' introduced in 
\cite{TreVol}. We give an efficient proof of these results and also give a 
new condition. The main results are as follows. Throughout, let 
\begin{align*}
\rho_{\sigma}(Q):=\frac{1}{\sigma(Q)}\int_{Q}M(\unit_{Q}\sigma)(x)dx,
\hspace{.2in}
[w,\sigma]_{p,\eps_p}
:=\sup_{Q\textnormal{ a cube}}
  \avg{w}_{Q}\avg{\sigma}_{Q}^{p-1}\rho_{\sigma}(Q)\eps_p(\rho_\sigma(Q)).
\end{align*}
where $\eps_p$ is an increasing function on $[1,\infty]$ satisfying 
$\sum_{r\in\mathbb{N}}\eps_p(2^{r})^{-\frac{1}{p}}<\infty$. 
Our first theorem is the following
\begin{thm}\label{main}
With definitions as above, and $1<p<\infty$, there holds
\begin{align*}
\norm{T_\mathcal{S}\sigma\cdot:L^p(\sigma)\to L^q(w)}
\lesssim [w,\sigma]_{p,\eps_p}^{\frac{1}{p}}+[\sigma,w]_{p'\eps_{p'}}^{\frac{1}{p'}}.
\end{align*}
\end{thm}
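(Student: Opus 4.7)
The approach is by duality and parallel corona stopping times. By duality, the claim is equivalent to bounding the bilinear form
\[
\Lambda(f,g) \;=\; \sum_{S\in\mathcal{S}} \avg{f\sigma}_S \avg{gw}_S |S|
\]
for $\|f\|_{L^p(\sigma)}=\|g\|_{L^{p'}(w)}=1$. I would build two stopping collections $\mathcal{F},\mathcal{G}\subseteq\mathcal{S}$: construct $\mathcal{F}$ so that the $\sigma$-average $\avg{f}^\sigma_Q := \sigma(Q)^{-1}\int_Q f\,d\sigma$ roughly doubles between successive $\mathcal{F}$-generations, and analogously $\mathcal{G}$ for $g$ against $w$. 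Each $S\in\mathcal{S}$ has minimal ancestors $\pi_\sigma(S)\in\mathcal{F}$, $\pi_w(S)\in\mathcal{G}$. Split $\Lambda=\Lambda_1+\Lambda_2$ according to whether $\pi_\sigma(S)\subseteq\pi_w(S)$ or $\pi_w(S)\subsetneq\pi_\sigma(S)$. Under the substitution $(f,\sigma,p)\leftrightarrow(g,w,p')$ the two halves exchange and produce, respectively, the contributions $[w,\sigma]_{p,\eps_p}^{1/p}$ and $[\sigma,w]_{p',\eps_{p'}}^{1/p'}$, so it suffices to treat $\Lambda_1$.

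On $\Lambda_1$, I would replace $\avg{f\sigma}_S|S|/\sigma(S) = \avg{f}^\sigma_S$ by the stopping value $\avg{f}^\sigma_F$ with $F=\pi_\sigma(S)$, reducing matters to
\[
\Lambda_1 \;\lesssim\; \sum_{F\in\mathcal{F}} \avg{f}^\sigma_F \sum_{\substack{S\in\mathcal{S}\\ \pi_\sigma(S)=F}} \avg{gw}_S\,\sigma(S).
\]
The inner sum is where Hyt\"onen's formula from \cite{Hyt} enters: that identity rewrites $\int_F M(\one_F\sigma)\,dx$ as a controlled sum over dyadic descendants of $F$ and thereby bounds the inner expression by $\rho_\sigma(F)\sigma(F)$ times a suitable $w$-maximal-type average of $g$. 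To avoid a logarithmic loss I would stratify by the dyadic layer $\rho_\sigma(S)\in[2^r,2^{r+1})$, replace $\eps_p(\rho_\sigma(S))\simeq\eps_p(2^r)$ on each layer, and invoke the pointwise consequence
\[
\avg{w}_Q\,\avg{\sigma}_Q^{p-1} \;\leq\; \frac{[w,\sigma]_{p,\eps_p}}{\rho_\sigma(Q)\,\eps_p(\rho_\sigma(Q))}
\]
of the entropy-bump definition.

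Closing by H\"older in the exponent $p$, the $f$-side is handled by a Carleson embedding for $d\sigma$ (producing $\|f\|_{L^p(\sigma)}$), the $g$-side by the $L^{p'}(w)$-boundedness of the dyadic $w$-maximal function (producing $\|g\|_{L^{p'}(w)}$), and the layered entropy factors assemble into the constant $[w,\sigma]_{p,\eps_p}^{1/p}$ precisely via the hypothesis $\sum_r \eps_p(2^r)^{-1/p}<\infty$. The main obstacle is the bookkeeping in the middle step: applying Hyt\"onen's identity with just enough sharpness to extract exactly the factor $\rho_\sigma\,\eps_p(\rho_\sigma)$, and no spurious logarithmic loss, so that the summability of $\eps_p$ is used optimally. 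Granted this, the symmetric treatment of $\Lambda_2$ completes the proof.
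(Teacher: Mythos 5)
Your overall architecture is sound but takes a genuinely different route from the paper. You rebuild the parallel-corona/stopping-time machinery from scratch, whereas the paper invokes the Sawyer-type characterization (Lemma \ref{testingdyad}) as a black box: boundedness of $T_{\mathcal{S}}\sigma\cdot$ is equivalent to the two testing conditions $\int_P\bigl(\sum_{Q\subset P}\avg{\sigma}_Q\one_Q\bigr)^pw\lesssim\mathcal{T}_1\,\sigma(P)$ and its dual. Your corona construction, carried to completion, essentially re-derives that reduction; once you are inside a single stopping block $F$, what you must control is exactly the testing integral over $F$. The paper then stratifies the cubes by $\rho_\sigma(Q)\simeq 2^r$, applies Hyt\"onen's expansion (Lemma \ref{hytlm}) on each layer, uses the bump to gain the factor $(2^r\eps_p(2^r))^{-1}$, uses the sparse $A_\infty$ estimate to recover the factor $2^r$, and sums in $r$ via $\sum_r\eps_p(2^r)^{-1/p}<\infty$. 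You have all of these quantitative ingredients, so the two arguments prove the same theorem; the paper's is shorter because the corona is outsourced to the cited testing theorem, while yours is self-contained but longer.

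There are two concrete soft spots in your middle step. First, Hyt\"onen's formula is not the identity rewriting $\int_F M(\one_F\sigma)$ as a sum over descendants; that role is played by the separate, elementary sparse estimate $\sum_{Q\subset F}\sigma(Q)\lesssim\int_F M(\one_F\sigma)\lesssim 2^r\sigma(F)$ on the layer where $\rho_\sigma\simeq 2^r$. Hyt\"onen's actual contribution is the $L^p$ expansion $\int_P\bigl(\sum_{Q\in\m{Q}}\avg{\sigma}_Q\one_Q\bigr)^pw\lesssim\sup_{Q\in\m{Q}}\bigl(\avg{w}_Q\avg{\sigma}_Q^{p-1}\bigr)\sum_{Q\in\m{Q}}\avg{\sigma}_Q\abs{Q}$, which is precisely what lets the bump $\avg{w}_Q\avg{\sigma}_Q^{p-1}\leq[w,\sigma]_{p,\eps_p}/(\rho_\sigma(Q)\eps_p(\rho_\sigma(Q)))$ be inserted when $p\neq 2$. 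Second, and relatedly, the linearized inner sum $\sum_{\pi_\sigma(S)=F}\avg{gw}_S\sigma(S)$ you reduce to cannot be closed directly against the bump for $p\neq 2$: writing $\avg{w}_S\sigma(S)=\avg{w}_S\avg{\sigma}_S^{p-1}\cdot\avg{\sigma}_S^{2-p}\abs{S}$ leaves the uncontrolled factor $\avg{\sigma}_S^{2-p}$. You must first apply H\"older/duality inside each corona block to produce the genuine $p$-th-power testing integral $\int_F\bigl(\sum_{S}\avg{\sigma}_S\one_S\bigr)^pw$, and only then stratify in $\rho_\sigma$ and apply Hyt\"onen's expansion layer by layer. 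With that reordering the argument closes as you describe.
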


For our next theorem, let 
\begin{align*}
[[w,\sigma]]_{p,\alpha_p}
:=\sup_{Q\textnormal{ a cube}}\avg{w}_{Q}\avg{\sigma}_{Q}^{p-1}\alpha_p(\avg{\sigma}_Q),
\end{align*}
where $\alpha_p$ is a function that is decreasing on $(0,1)$ and increasing on 
$(1,\infty)$ and that satisfies $\sum_{r\in\mathbb{Z}}\alpha_p(2^{-r})^{-\frac{1}{p}}<\infty$.
We have
\begin{thm}\label{main2}
With definitions and above, and $1<p<\infty$ there holds
\begin{align*}
\norm{T_\mathcal{S}\sigma\cdot:L^p(\sigma)\to L^p(w)}
\lesssim [[w,\sigma]]_{p,\alpha_p}^{\frac{1}{p}}
  +[[\sigma,w]]_{p',\alpha_{p'}}^{\frac{1}{p'}}.
\end{align*}
\end{thm}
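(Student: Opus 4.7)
I plan to parallel the proof of Theorem \ref{main}, substituting for the Orlicz bump $\rho_\sigma(Q)\eps_p(\rho_\sigma(Q))$ the pointwise two-sided scale bump $\alpha_p(\avg{\sigma}_Q)$. As in that proof, the starting point is the formula of Hyt\"onen from \cite{Hyt}, which should reduce the operator-norm estimate for $T_\mathcal{S}\sigma\cdot : L^p(\sigma) \to L^p(w)$ to controlling the bilinear form
\[
\Lambda(f,g) \;:=\; \sum_{Q \in \mathcal{S}} \avg{f}_Q^\sigma\,\avg{g}_Q^w\,\avg{\sigma}_Q\,\avg{w}_Q\,|Q|, \qquad f \in L^p(\sigma),\ g \in L^{p'}(w).
\]
The symmetry of $\Lambda$ under the swap $(f,\sigma)\leftrightarrow(g,w)$ is what will produce the two terms in the conclusion: one half of the estimate will use $[[w,\sigma]]_{p,\alpha_p}$, the other the dual quantity $[[\sigma,w]]_{p',\alpha_{p'}}$.

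The decisive step is to partition the sparse collection into dyadic level sets of $\avg{\sigma}_Q$:
\[
\mathcal{S}_r \;:=\; \{Q \in \mathcal{S} : 2^{r} \leq \avg{\sigma}_Q < 2^{r+1}\}, \qquad r \in \Z.
\]
The monotonicity of $\alpha_p$ (decreasing on $(0,1)$, increasing on $(1,\infty)$) makes $\alpha_p(\avg{\sigma}_Q)$ comparable to $\alpha_p(2^{r})$ or $\alpha_p(2^{r+1})$ uniformly on $\mathcal{S}_r$, so the hypothesis $[[w,\sigma]]_{p,\alpha_p}<\infty$ yields the pointwise control
\[
\avg{w}_Q\avg{\sigma}_Q^{p-1} \;\lesssim\; \frac{[[w,\sigma]]_{p,\alpha_p}}{\alpha_p(2^{r})} \qquad (Q \in \mathcal{S}_r).
\]

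For each $r$, I will estimate the $\mathcal{S}_r$-piece of $\Lambda$ by the standard principal-cube / sparse Carleson-embedding machinery used in the proof of Theorem \ref{main}. The target is a per-level estimate of the form
\[
\sum_{Q \in \mathcal{S}_r} \avg{f}_Q^\sigma\,\avg{g}_Q^w\,\avg{\sigma}_Q\,\avg{w}_Q\,|Q| \;\lesssim\; \left(\frac{[[w,\sigma]]_{p,\alpha_p}}{\alpha_p(2^{r})}\right)^{1/p} \|f\|_{L^p(\sigma)}\|g\|_{L^{p'}(w)}.
\]
Summing over $r \in \Z$ and invoking the hypothesis $\sum_{r \in \Z} \alpha_p(2^{-r})^{-1/p} < \infty$ then closes the first half of the bound. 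The second half, producing $[[\sigma,w]]_{p',\alpha_{p'}}^{1/p'}$, is obtained by repeating the argument with the roles of $\sigma$ and $w$ interchanged and applying Hyt\"onen's formula to the dual operator.

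The main obstacle I anticipate is extracting exactly the $1/p$ power of $\alpha_p(2^{r})^{-1}$ in the per-level estimate; any weaker power would prevent the geometric sum over $r$ from converging. This forces a careful coupling between the level-set decomposition of $\mathcal{S}$ and the principal-cube stopping times produced by Hyt\"onen's reduction, so that the bump constant enters the Carleson embedding with the intended exponent. It is precisely here that the efficiency afforded by \cite{Hyt} should be essential, just as in Theorem \ref{main}.
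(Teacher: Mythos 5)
Your level-set decomposition $\mathcal{S}_r=\{Q:\avg{\sigma}_Q\simeq 2^r\}$ and the pointwise bound $\avg{w}_Q\avg{\sigma}_Q^{p-1}\lesssim \alpha_p(2^r)^{-1}[[w,\sigma]]_{p,\alpha_p}$ on each level are exactly the right opening moves and match the paper. The gap is the target you set for each level: if the displayed per-level estimate
\[
\sum_{Q\in\mathcal{S}_r}\avg{f}_Q^\sigma\avg{g}_Q^w\avg{\sigma}_Q\avg{w}_Q\abs{Q}
\lesssim \Bigl(\alpha_p(2^r)^{-1}[[w,\sigma]]_{p,\alpha_p}\Bigr)^{1/p}\norm{f}_{L^p(\sigma)}\norm{g}_{L^{p'}(w)}
\]
held for the \emph{full} bilinear form, then summing over $r$ would bound the operator norm by $[[w,\sigma]]_{p,\alpha_p}^{1/p}$ alone, with no dual term --- a strictly stronger assertion than the theorem, and one that cannot hold, since the one-sided bump gives no control in the dual direction. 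It also contradicts your own (correct) remark that the two terms must come from the two halves of the form. The missing mechanism is the reduction to Sawyer-type testing conditions (Lemma \ref{testingdyad}): one only needs to bound the testing constant $\mathcal{T}_1$ (i.e.\ the localized integral $\int_P(\sum_{Q\subset P}\avg{\sigma}_Q\one_Q)^pw$ against $\sigma(P)$) by the \emph{first} power of $[[w,\sigma]]_{p,\alpha_p}$, the exponents $1/p$ and $1/p'$ being supplied by the conclusion $\norm{T_\mathcal{S}\sigma\cdot}\lesssim\mathcal{T}_1^{1/p}+\mathcal{T}_2^{1/p'}$. Your anticipated struggle to ``extract exactly the $1/p$ power'' per level is a symptom of aiming at the wrong target; in the testing formulation that power comes for free.

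You have also misplaced the role of Hyt\"onen's formula. It is not a reduction to a bilinear form; it is the estimate of Lemma \ref{hytlm}, $\int_P(\sum_{Q\in\mathcal{Q}}\avg{\sigma}_Q\one_Q)^pw\lesssim [w,\sigma]_p^{\mathcal{Q}}\sum_Q\avg{\sigma}_Q\abs{Q}$, which is precisely what lets one evaluate the $p$-th power of the sum inside the testing integral when $p\neq2$. With it, the level-$r$ testing integral is at most $\alpha_p(2^r)^{-1}[[w,\sigma]]_{p,\alpha_p}\sum_{Q\in\mathcal{Q}_r}\avg{\sigma}_Q\abs{Q}$; since $\avg{\sigma}_Q\simeq2^r$ on $\mathcal{Q}_r$, sparseness collapses $\sum_{Q\in\mathcal{Q}_r}\avg{\sigma}_Q\abs{Q}\simeq 2^r\sum_{Q^*\in\mathcal{Q}_r^*}\abs{Q^*}\simeq\sum_{Q^*\in\mathcal{Q}_r^*}\sigma(Q^*)\le\sigma(P)$, and taking $p$-th roots and summing over $r$ using $\sum_r\alpha_p(2^r)^{-1/p}<\infty$ finishes. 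Without this lemma (or a fully executed principal-cube substitute) you have no handle on the exponent $p$, and no coupling of stopping times will rescue the per-level bilinear target as stated.
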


The type of theorems we are proving are known as ``bumps'', because they slightly 
strengthen the joint $A_p$ characteristic. The bumps in Theorem \ref{main} were 
introduced in \cite{TreVol} and are known as ``entropy bumps'', and the bumps in 
Theorem \ref{main2} seem to be new. There is a long history of theorems 
of this type (see for example \cites{CruMoe2013,CruMarPer2011,CruMarPer2007,CruRezVol2014,Lac2016,HytLac2012,
LerMoe2013,Neu1983,NazRezTreVol2013,Per1994}) but 
in \cite{TreVol} it is shown that under some mild conditions, the entropy bumps are 
smaller than other bumps and so this approach is more robust.

Our proof builds on the ideas in \cite{LacSpe} and uses an interesting formula by 
Hyt\"onen in \cite{Hyt} that 
generalizes the expansion of sums like $(\sum_j a_j)^2$ to powers other 
than $2$. This formula seems to be powerful and it seems to have been first 
observed in \cite{Hyt}. 


\section{The Proof}
The proof will use the following facts and notation. First, all cubes 
considered below are in the sparse collection $\mathcal{S}$. For a collection $\mathcal{Q}$
of cubes let $[w,\sigma]^\mathcal{Q}_{p}:=\sup_{Q\in\mathcal{Q}}
  \avg{w}_{Q}\avg{\sigma}_{Q}^{p-1}$.
The first fact we use is the following deep theorem, 
originally due to Sawyer. See \cites{Saw, Hyt, LacSawUri}.
\begin{lm}\label{testingdyad}
Let $\md$ be a dyadic grid and let $\m{S}\subset \md$ be sparse. 
Define:
\begin{align*}
\mathcal{T}_1 :=
\sup_{P\in\m{S}}\frac{1}{\sigma(P)}\int_{P}
    \abs{\sum_{Q\in\m{S}:Q\subset P}\avg{\sigma}_Q\one_Q(x)}^pw(x)dx,
\end{align*}
\begin{align*}
\mathcal{T}_2 :=
\sup_{P\in\m{S}}\frac{1}{w(P)}\int_{P}
    \abs{\sum_{Q\in\m{S}:Q\subset P}\avg{w}_Q\one_Q(x)}^{p'}\sigma(x)dx.
\end{align*}
Then:
\begin{align*}
\norm{T_\mathcal{S}\sigma\cdot:L^p(\sigma)\to L^p(w)}\lesssim \mathcal{T}_1^{\frac{1}{p}}
  +\mathcal{T}_2^{\frac{1}{p'}}.
\end{align*}
\end{lm}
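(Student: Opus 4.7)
\emph{Proof proposal.} The plan is to combine duality with a parallel corona decomposition in the style of Sawyer's testing theorem. By duality,
\[
\|T_{\m{S}}\sigma\cdot\|_{L^p(\sigma)\to L^p(w)} \;=\; \sup\, \Lambda(f,g),\qquad \Lambda(f,g) \;:=\; \sum_{Q\in\m{S}} \avg{\sigma f}_Q \int_Q g\,w\,dx,
\]
where the supremum runs over nonnegative $f\in L^p(\sigma)$, $g\in L^{p'}(w)$ of unit norm. Writing $\avg{h}_Q^\mu := \mu(Q)^{-1}\int_Q h\,d\mu$, the summand factors as $\avg{f}_Q^\sigma\,\avg{g}_Q^w\,\avg{\sigma}_Q\, w(Q)$, and the goal is to bound $\Lambda(f,g)$ by $(\m{T}_1^{1/p}+\m{T}_2^{1/p'})\|f\|_{L^p(\sigma)}\|g\|_{L^{p'}(w)}$.

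Next, I would build two systems of stopping cubes: $\m{F}\subset\m{S}$ along which the $\sigma$-averages $\avg{f}^\sigma$ double (sparse with respect to $\sigma$), and $\m{G}\subset\m{S}$ along which $\avg{g}^w$ doubles (sparse with respect to $w$). With $\pi_{\m{F}}(Q),\pi_{\m{G}}(Q)$ the minimal cubes of $\m{F},\m{G}$ containing $Q$, the standard construction supplies $\avg{f}_Q^\sigma\leq 2\avg{f}_{\pi_{\m{F}}(Q)}^\sigma$, the analogue for $g$, and the Carleson embedding bounds $\sum_{F\in\m{F}}(\avg{f}_F^\sigma)^p\sigma(F)\lesssim\|f\|_{L^p(\sigma)}^p$ and $\sum_{G\in\m{G}}(\avg{g}_G^w)^{p'}w(G)\lesssim\|g\|_{L^{p'}(w)}^{p'}$.

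Split $\Lambda=\Lambda_1+\Lambda_2$ according to whether $\pi_{\m{F}}(Q)\subseteq\pi_{\m{G}}(Q)$ or $\pi_{\m{G}}(Q)\subsetneq\pi_{\m{F}}(Q)$; the two halves are symmetric under swapping $(\sigma,f,\m{F},p)\leftrightarrow(w,g,\m{G},p')$ and will provide the $\m{T}_1^{1/p}$ and $\m{T}_2^{1/p'}$ contributions respectively. For $\Lambda_1$, fix $F\in\m{F}$, dominate $\avg{f}_Q^\sigma\leq 2\avg{f}_F^\sigma$, group by $F$, and extend the inner sum to all $Q\subseteq F$. H\"older in $L^p(w)\times L^{p'}(w)$ together with the testing hypothesis $\m{T}_1$ gives
\[
\sum_{Q\subseteq F}\avg{\sigma}_Q\int_Q g\,w \;=\; \int_F\Bigl(\sum_{Q\subseteq F}\avg{\sigma}_Q\one_Q\Bigr)g\,w \;\leq\; \m{T}_1^{1/p}\,\sigma(F)^{1/p}\,\|g\one_F\|_{L^{p'}(w)}.
\]

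The main obstacle is closing the outer sum $\sum_F\avg{f}_F^\sigma\sigma(F)^{1/p}\|g\one_F\|_{L^{p'}(w)}$, since $\m{F}$ is sparse only with respect to $\sigma$ and the stopping cubes can overlap badly in $w$-measure, so the naive estimate $\sum_F\int_F|g|^{p'}w\lesssim\|g\|_{L^{p'}(w)}^{p'}$ fails. This is precisely where the parallel corona is essential: I would further restrict to pairs $(F,G)$ with $G=\pi_{\m{G}}(F)\supseteq F$, use that $\avg{g}_Q^w\lesssim\avg{g}_G^w$ on the relevant corona, and replace $\|g\one_F\|_{L^{p'}(w)}$ by $\avg{g}_G^w w(F)^{1/p'}$. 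Summing via H\"older and the two Carleson embeddings, with the $\sigma$-sparseness of $\m{F}$ and the $w$-sparseness of $\m{G}$ absorbing the nested $F$'s into the $w$-mass of their $\m{G}$-parents, yields the desired bound by $\|f\|_{L^p(\sigma)}\|g\|_{L^{p'}(w)}$. The symmetric piece $\Lambda_2$ is treated identically, producing the $\m{T}_2^{1/p'}$ factor.
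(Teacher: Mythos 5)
First, a point of reference: the paper does not prove this lemma at all --- it is imported as a known ``deep theorem, originally due to Sawyer,'' with pointers to \cite{Saw}, \cite{Hyt}, and \cite{LacSawUri}. So your proposal is being measured against the proofs in those references. Your opening moves are the standard ones and are correct: the duality reduction, the factorization $\avg{\sigma f}_Q\int_Q g\,w=\avg{f}_Q^{\sigma}\avg{g}_Q^{w}\avg{\sigma}_Q\,w(Q)$, the two stopping families $\m{F}$ and $\m{G}$ with their quasi-orthogonality/Carleson embeddings, the split according to $\pi_{\m{F}}(Q)\subseteq\pi_{\m{G}}(Q)$, and the per-$F$ use of the testing constant via H\"older are exactly how the parallel-corona argument begins.

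The gap is that the step you yourself flag as ``the main obstacle'' is the entire content of the theorem, and the resolution you sketch does not close it. After replacing $\norm{g\one_F}_{L^{p'}(w)}$ by $\avg{g}_{\pi_{\m{G}}(F)}^{w}\,w(F)^{1/p'}$ and applying H\"older in the outer sum, you need
\[
\sum_{G\in\m{G}}\bigl(\avg{g}_G^{w}\bigr)^{p'}\sum_{F\in\m{F}\,:\,\pi_{\m{G}}(F)=G}w(F)\;\lesssim\;\norm{g}_{L^{p'}(w)}^{p'},
\]
which, via the $w$-Carleson embedding theorem, would require $\sum_{F\in\m{F}:F\subseteq G}w(F)\lesssim w(G)$, i.e.\ that $\m{F}$ is a Carleson family \emph{with respect to $w$}. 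But $\m{F}$ is Carleson only with respect to $\sigma$: the stopping cubes sharing a common $\m{G}$-parent $G$ can form a long nested chain $F_1\supsetneq F_2\supsetneq\cdots\supsetneq F_N$ with $w$ essentially concentrated on $F_N$, so that $\sum_k w(F_k)\simeq N\,w(G)$. Neither the $\sigma$-sparseness of $\m{F}$ nor the $w$-sparseness of $\m{G}$ rules this out, so ``absorbing the nested $F$'s into the $w$-mass of their $\m{G}$-parents'' is not a valid deduction. This is precisely the difficulty that makes the $L^p$ testing theorem for positive dyadic operators nontrivial, and the known proofs insert an additional device exactly here: for instance, a further splitting of the cubes according to the relative size of $(\avg{f}_Q^{\sigma})^{p}\sigma(Q)$ versus $(\avg{g}_Q^{w})^{p'}w(Q)$ followed by a single corona (Treil), or a reduction of the parallel-corona sum to a discrete bilinear form indexed by stopping cubes which is then handled by an $\ell^p$ Schur test or the $L^p$ Carleson embedding theorem (Lacey--Sawyer--Uriarte-Tuero, H\"anninen--Hyt\"onen--Li). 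As written, your argument establishes the lemma only under the extra (and generally false) hypothesis that $\m{F}$ is also $w$-Carleson, so the decisive step is missing.
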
 

We first give the proof of Theorem \ref{main} in the case $p=2$. 
\begin{proof}[Proof of Theorem \ref{main} when $p=2$]
We will verify the testing conditions hold; we will only verify the first condition 
as the second condition is verified similarly. Fix $P \in \mathcal{S}$. By the triangle inequality and the summability condition of $\eps_2$, it suffices to show
\begin{align}\label{E:testing}
\int_{P}\abs{\sum_{Q\in\mathcal{Q}_r}
  \avg{\sigma}_Q\unit_Q}^2w
\lesssim \frac{1}{\eps_2(2^r)}[{\sigma},{w}]_{2,\eps_{2}}\sigma(P),
\end{align}
where $\mathcal{Q}_{r}:=\{Q:Q\subset P \textnormal{ and }
  \rho_{\sigma}(Q)\simeq
  2^{r}\}$ for $r\in\mathbb{N}$. Since two cubes in $\mathcal{Q}_r$ are either nested or disjoint, there holds
\begin{align*}
\abs{\sum_{Q\in\m{Q}_r}\avg{\sigma}_Q\unit_Q(x)}^2
\simeq \sum_{Q\in\mathcal{Q}_r}\sum_{Q'\subset Q}
  \avg{\sigma}_{Q}\avg{\sigma}_{Q'}\unit_{Q'}(x).
\end{align*}
Inserting this into \eqref{E:testing}, and using $\rho_{\sigma}(Q)\simeq 2^r$ for $Q\in\mathcal{Q}_r$, 
\begin{align*}
\int_{P}\abs{\sum_{Q\in\m{Q}_r}\avg{\sigma}_Q\unit_Q}^2w
&\simeq \sum_{Q\in\mathcal{Q}_r}\sum_{Q'\subset Q}
   \avg{\sigma}_{Q}  \avg{\sigma}_{Q'}w(Q')
\\&=\sum_{Q\in\mathcal{Q}_r}\avg{\sigma}_{Q}
  \sum_{Q'\subset Q}
  \abs{Q'}
  \avg{\sigma}_{Q'}\avg{w}_{Q'}\frac{\rho_{\sigma}(Q)\eps(\rho_{\sigma}(Q))}
    {\rho_{\sigma}(Q)\eps(\rho_{\sigma}(Q))}.
\\&\lesssim \frac{1}{2^r\eps_2(2^r)}[{\sigma},{w}]_{2,\eps_2}\sum_{Q\in\mathcal{Q}_r}\avg{\sigma}_{Q}
  \sum_{Q'\subset Q}
  \abs{Q'}.
\end{align*}
Since $\mathcal{Q}_r$ is sparse, $\sum_{Q\in\mathcal{Q}_r}\avg{\sigma}_{Q} \sum_{Q'\subset Q}\abs{Q'} \lesssim \sum_{Q\in\mathcal{Q}_r}\sigma(Q).$

Set $\mathcal{Q}^*_r$ to be the maximal cubes in $\mathcal{Q}_r$. Using the fact that $\abs{E_Q}
\simeq\abs{Q}$ and that $\{E_Q\}$ are pairwise disjoint, there holds:
\begin{align}\label{E:ainfty}
\sum_{Q\in\mathcal{Q}_r}\sigma(Q)
\simeq \sum_{Q^*\in\mathcal{Q}^*_r} \int_{Q^*}\sum_{Q\subset Q^*}\avg{\sigma}_Q\unit_{E_Q}
\leq \sum_{Q^*\in\mathcal{Q}^*_r} \int_{Q^*} M(\sigma\unit_{Q^*})
\leq 2^r \sum_{Q^*\in\mathcal{Q}^*_r}\sigma(Q^*).
\end{align}
Since the cubes in $\mathcal{Q}_r^*$ are pairwise disjoint, the sum is bounded by $\sigma(P)$, as desired.
\end{proof}

To use a similar idea for $p\neq 2$ we need the following lemma proven in \cite{Hyt}.
\begin{lm}\label{hytlm}
Let $\mathcal{Q}$ be any collection of cubes. With obvious notation 
there holds
\begin{align}\label{E:exp1}
\int_{P}\left(\sum_{Q\in\mathcal{Q}:Q\subset P}\avg{\sigma}_{Q}\unit_{Q}\right)^{p}w
\lesssim [w,\sigma]^\mathcal{Q}_{p}\sum_{Q\subset P}\avg{\sigma}_{Q}\abs{Q}.
\end{align}
\end{lm}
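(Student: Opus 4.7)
The plan is to use the Hyt\"onen pointwise telescoping, which generalizes the identity $(\sum a)^2 = \sum a \sum a$ to arbitrary $p > 1$. The elementary starting point is: for non-negative reals $a_1,\dots,a_N$ with partial sums $S_k = \sum_{j \le k} a_j$, convexity of $t \mapsto t^p$ for $p \ge 1$ yields
\[
\Bigl(\sum_{k=1}^N a_k\Bigr)^p = \sum_{k=1}^N (S_k^p - S_{k-1}^p) \le p \sum_{k=1}^N a_k S_k^{p-1}.
\]
I would apply this at each $x \in P$ to the sequence $a_k = \avg{\sigma}_{Q^{(k)}(x)}$, where $Q^{(1)}(x) \subsetneq Q^{(2)}(x) \subsetneq \cdots$ lists the cubes of $\mathcal{Q}$ containing $x$ inside $P$, ordered smallest-to-largest in the nested chain. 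With this ordering $S_k = f_{Q^{(k)}(x)}(x)$, where $f_R(x) := \sum_{Q \in \mathcal{Q},\, Q \subseteq R} \avg{\sigma}_Q \one_Q(x)$. Re-indexing by cubes and integrating against $w$ produces
\[
\int_P \Bigl(\sum_{Q \subset P} \avg{\sigma}_Q \one_Q\Bigr)^p w \le p \sum_{Q \subset P} \avg{\sigma}_Q \int_Q f_Q^{p-1} w.
\]

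Next, I would apply H\"older to the inner integral, $\int_Q f_Q^{p-1} w \le \bigl(\int_Q f_Q^p w\bigr)^{1/p'} w(Q)^{1/p}$, and use the two-weight condition in the form $\avg{w}_Q \avg{\sigma}_Q^{p-1} \le [w,\sigma]_p^{\mathcal{Q}}$ to obtain
\[
\avg{\sigma}_Q\, w(Q)^{1/p} \le \bigl([w,\sigma]_p^{\mathcal{Q}}\bigr)^{1/p} \sigma(Q)^{1/p}.
\]
Substituting back,
\[
\int_P f^p w \le p \bigl([w,\sigma]_p^{\mathcal{Q}}\bigr)^{1/p} \sum_{Q \subset P} \Bigl(\int_Q f_Q^p w\Bigr)^{1/p'} \sigma(Q)^{1/p}.
\]
The argument then closes by induction on cube depth (or a truncation): assuming the desired majorant for sub-cubes, i.e.\ $\int_Q f_Q^p w \le C[w,\sigma]_p^{\mathcal{Q}} \sum_{R \subseteq Q} \sigma(R)$, substitute and use $\sum_{R \subseteq Q} \sigma(R) \lesssim \sigma(Q)$, which holds thanks to the sparse structure implicit in the applications of the lemma. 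This produces a fixed-point equation $C = p \, C^{1/p'} \cdot (\text{sparse constant})^{1/p'}$ in the constant, which has the finite solution $C \sim (\text{sparse constant})$, and one recovers $\int_P f^p w \lesssim [w,\sigma]_p^{\mathcal{Q}} \sum_{Q \subset P} \avg{\sigma}_Q |Q|$.

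The main obstacle is the clean execution of this induction/absorption, i.e.\ justifying the recursion and pinning down the correct constant without losing sharpness. Hyt\"onen's contribution in \cite{Hyt} is precisely this organization: a greedy stopping-time decomposition of $\mathcal{Q}$ into layers on which $\avg{\sigma}_Q$ varies by at most a fixed factor converts the recursion into a convergent geometric series, after which the single-layer estimate is elementary. One could also avoid the induction by working directly with the pointwise bound and invoking a Carleson embedding-type estimate, but the greedy stratification is what makes the constant tractable.
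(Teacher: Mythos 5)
Your opening is exactly the right tool: the telescoping bound $(\sum_k a_k)^p=\sum_k(S_k^p-S_{k-1}^p)\le p\sum_k a_kS_k^{p-1}$, applied along the chain of cubes containing $x$ and re-indexed to give $\int_Pf^pw\le p\sum_{Q}\avg{\sigma}_Q\int_Qf_Q^{p-1}w$, is precisely Hyt\"onen's formula; the paper itself does not reprove the lemma but simply cites \cite{Hyt}, so this is the correct starting point. However, your closing step contains a genuine error. You invoke $\sum_{R\subseteq Q}\sigma(R)\lesssim\sigma(Q)$ as a consequence of sparseness, but sparseness is a Carleson condition with respect to \emph{Lebesgue} measure ($\sum_{R\subseteq Q}\abs{R}\lesssim\abs{Q}$), not with respect to $\sigma$, and the $\sigma$-version is false in general. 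Indeed, the failure of exactly this inequality is what the entropy functional $\rho_\sigma(Q)=\sigma(Q)^{-1}\int_QM(\one_Q\sigma)$ is designed to measure: the paper's display \eqref{E:ainfty} shows only $\sum_{Q\subseteq Q^*}\sigma(Q)\le\rho_\sigma(Q^*)\,\sigma(Q^*)$, and $\rho_\sigma$ can be arbitrarily large. If your inequality held, Theorem \ref{main} would need no bump at all. Without it, your recursion does not close: H\"older in the sum gives only the genuinely larger double sum $\sum_Q\sum_{R\subseteq Q}\sigma(R)$. (Separately, the ``fixed-point equation in $C$'' needs an a priori finiteness argument, e.g.\ truncation to finite families, but that is a fixable technicality.)

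The repair is to use sparseness in the form it actually takes. For $1<p\le 2$ no induction is needed: since $t\mapsto t^{p-1}$ is subadditive, $f_Q^{p-1}\le\sum_{Q'\subseteq Q}\avg{\sigma}_{Q'}^{p-1}\one_{Q'}$, hence
\begin{align*}
\int_Qf_Q^{p-1}w\le\sum_{Q'\subseteq Q}\avg{\sigma}_{Q'}^{p-1}\avg{w}_{Q'}\abs{Q'}
\le[w,\sigma]^{\mathcal{Q}}_p\sum_{Q'\subseteq Q}\abs{Q'}\lesssim[w,\sigma]^{\mathcal{Q}}_p\abs{Q},
\end{align*}
the last step being Lebesgue-measure sparseness; plugging this into your first display gives \eqref{E:exp1} directly. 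For $p>2$ one iterates the telescoping on $f_Q^{p-1}$ until the remaining exponent drops below $1$ and then applies the same subadditivity step. Note also that sparseness is genuinely needed even though the lemma says ``any collection'': taking $w=\sigma=1$ and $\mathcal{Q}$ all dyadic subcubes of $P$ down to generation $N$ gives left-hand side $(N+1)^p\abs{P}$ against right-hand side $(N+1)\abs{P}$, so you were right to look for a place to use the sparse structure --- it just enters through $\sum_{Q'\subseteq Q}\abs{Q'}\lesssim\abs{Q}$, not through a $\sigma$-Carleson bound.
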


We use this to prove Theorem \ref{main} for all $p>1$. 
\begin{proof}[Proof of Theorem \ref{main}]
We will verify the testing conditions. For simplicity, we will verify the 
first condition; the dual condition is verified similarly. Thus, let 
$P$ be any cube in $\mathcal{D}$. 
For $r\geq 0$, let $\mathcal{Q}_r=\{Q\subset P:\rho_\sigma(Q)\simeq 2^r\}$. Note that 
for these cubes, there holds
\begin{align*}
[w,\sigma]^\mathcal{Q}_{p}
\lesssim \frac{1}{2^r\eps_p(2^r)}[w,\sigma]_{p,\eps_p}.
\end{align*}
Therefore, by the triangle inequality and Lemma \ref{hytlm} there holds
\begin{align*}
\left(\int_{P}\left(
  \sum_{Q:Q\subset P}\avg{\sigma}_{Q}
  \unit_{Q}\right)^{p}w\right)^{\frac{1}{p}}
&\leq \sum_{r\geq 0}\left(\int_{P}\left(
  \sum_{Q\in\mathcal{Q}_r}\avg{\sigma}_{Q}
  \unit_{Q}\right)^{p}w\right)^{\frac{1}{p}}
\\&\lesssim[w,\sigma]_{p,\eps_p}^{\frac{1}{p}}
  \sum_{r\geq 0}\frac{1}{\eps_p(2^r)^{\frac{1}{p}}}
  \left(\frac{1}{2^r}\sum_{Q\in\mathcal{Q}_r}\sigma(Q)\right)^{\frac{1}{p}}
\\&\lesssim[w,\sigma]_{p,\eps_p}^{\frac{1}{p}}
  \sum_{r\geq 0}\frac{1}{\eps_p(2^r)^{\frac{1}{p}}}
  \sigma(P)^{\frac{1}{p}}.
\end{align*}
In the last estimate, we used the fact that for the cubes in $\mathcal{Q}_r$, 
$\rho_\sigma(Q)\simeq 2^r$ and so we can use the same estimate as in 
\eqref{E:ainfty}.
The summability condition on $\eps_p$ completes the proof. 
\end{proof}

We conclude with the proof of Theorem \ref{main2}.
\begin{proof}[Proof of Theorem \ref{main2}]
As above we need to verify the testing conditions, and we will only verify 
the first. Thus, let $P$ be any cube in $\mathcal{D}$. 
For $r\in\mathbb{Z}$ let $\mathcal{Q}_r=\{Q\subset P:\avg{\sigma}_{Q}\simeq 2^r\}$.
Using the summability condition on $\alpha_p$, as in the proof of Theorem 
\ref{main} we may assume that all cubes are contained in $\mathcal{Q}_r$. 

Again let $\mathcal{Q}_r^\ast$ denote the maximal cubes in $\mathcal{Q}_r$.  Using Lemma \ref{hytlm}, there holds
\begin{align*}
\int_{P}\left(\sum_{Q\in\mathcal{Q}_r}\avg{\sigma}_{Q}\unit_{Q}\right)^{p}w
&\lesssim \frac{1}{\alpha_p(2^r)}[[w,\sigma]]_{p,\alpha_p}
  \sum_{Q\in\mathcal{Q}_r}\avg{\sigma}_{Q}\abs{Q}
\\&\simeq\frac{1}{\alpha_p(2^r)}[[w,\sigma]]_{p,\alpha_p}\sum_{Q^\ast\in\mathcal{Q}_r^\ast}
  \sum_{Q\subset Q^\ast}\abs{Q}
\\&\simeq\frac{1}{\alpha_p(2^r)}[[w,\sigma]]_{p,\alpha_p}
  \sum_{Q^\ast\in\mathcal{Q}_r^\ast}\abs{Q}^\ast.
\end{align*}
In the second line we used the definition of $\mathcal{Q}_r$ and in the third 
line we used sparseness. Again, using the definition of $\mathcal{Q}_r$, the 
sum is equivalent to $\sum_{Q^\ast\in\mathcal{Q}_r^\ast}\sigma({Q}^\ast)$ and 
by the maximality of the cubes in $\mathcal{Q}^\ast$, it follows that this sum 
is dominated by $\sigma(P)$.
\end{proof}

\textbf{Acknowledgment.}R. Rahm would like to thank Kabe Moen for telling him 
about the formula from \cite{Hyt}.

\begin{bibdiv}
\begin{biblist}
\bib{ConRey}{article}{
   author={Conde-Alonso, Jos{\'e} M.},
   author={Rey, Guillermo},
   title={A pointwise estimate for positive dyadic shifts and some
   applications},
   journal={Math. Ann.},
   volume={365},
   date={2016},
   number={3-4},
   pages={1111--1135}
}

\bib{CruMarPer2007}{article}{
   author={Cruz-Uribe, D.},
   author={Martell, J. M.},
   author={P{\'e}rez, C.},
   title={Sharp two-weight inequalities for singular integrals, with
   applications to the Hilbert transform and the Sarason conjecture},
   journal={Adv. Math.},
   volume={216},
   date={2007},
   number={2},
   pages={647--676},
   issn={0001-8708},
   review={\MR{2351373}},
   doi={10.1016/j.aim.2007.05.022},
}

\bib{CruMarPer2011}{book}{
   author={Cruz-Uribe, David V.},
   author={Martell, Jos{\'e} Maria},
   author={P{\'e}rez, Carlos},
   title={Weights, extrapolation and the theory of Rubio de Francia},
   series={Operator Theory: Advances and Applications},
   volume={215},
   publisher={Birkh\"auser/Springer Basel AG, Basel},
   date={2011},
   pages={xiv+280},
   isbn={978-3-0348-0071-6},
   review={\MR{2797562}},
   doi={10.1007/978-3-0348-0072-3},
}

\bib{CruMoe2013}{article}{
   author={Cruz-Uribe, David},
   author={Moen, Kabe},
   title={One and two weight norm inequalities for Riesz 
   potentials},
   journal={Illinois J. Math.},
   volume={57},
   date={2013},
   number={1},
   pages={295--323}
}

\bib{CruRezVol2014}{article}{
   author={Cruz-Uribe, David},
   author={Reznikov, Alexander},
   author={Volberg, Alexander},
   title={Logarithmic bump conditions and the two-weight boundedness of
   Calder\'on-Zygmund operators},
   journal={Adv. Math.},
   volume={255},
   date={2014},
   pages={706--729},
   issn={0001-8708},
   review={\MR{3167497}},
   doi={10.1016/j.aim.2014.01.016},
}

\bib{Hyt}{article}{
   author={Hyt{\"o}nen, Tuomas P.},
   title={The $A_2$ theorem: remarks and complements},
   conference={
      title={Harmonic analysis and partial differential equations},
   },
   book={
      series={Contemp. Math.},
      volume={612},
      publisher={Amer. Math. Soc., Providence, RI},
   },
   date={2014},
   pages={91--106}
}

\bib{HytLac2012}{article}{
   author={Hyt{\"o}nen, Tuomas P.},
   author={Lacey, Michael T.},
   title={The $A_p$-$A_\infty$ inequality for general
   Calder\'on-Zygmund operators},
   journal={Indiana Univ. Math. J.},
   volume={61},
   date={2012},
   number={6},
   pages={2041--2092},
   issn={0022-2518},
   review={\MR{3129101}},
   doi={10.1512/iumj.2012.61.4777},
}

\bib{Lac2016}{article}{
   author={Lacey, Michael T.},
   title={On the Separated Bumps Conjecture for Calder\'on-Zygmund
   Operators},
   journal={Hokkaido Math. J.},
   volume={45},
   date={2016},
   number={2},
   pages={223--242},
   issn={0385-4035},
   review={\MR{3532130}},
   doi={10.14492/hokmj/1470139402},
}

\bib{Ler}{article}{
   author={Lerner, Andrei K.},
   title={A pointwise estimate for the local sharp maximal function with
   applications to singular integrals},
   journal={Bull. Lond. Math. Soc.},
   volume={42},
   date={2010},
   number={5},
   pages={843--856}
}

\bib{LerMoe2013}{article}{
   author={Lerner, Andrei K.},
   author={Moen, Kabe},
   title={Mixed $A_p$-$A_\infty$ estimates with one supremum},
   journal={Studia Math.},
   volume={219},
   date={2013},
   number={3},
   pages={247--267}
}

\bib{LacSawUri}{article}{
   author={Lacey, Michael T.},
   author={Sawyer, Eric T.},
   author={Uriarte-Tuero, Ignacio},
   title={Two Weight Inequalities for
   Discrete Positive Operators},
   date={2009},
   eprint={http://arxiv.org/abs/0911.3437},
}

\bib{Lac}{article}{
  author={Lacey, Michael T.},
  title={An Elementary Proof of the $A_2$ Bound},
  date={2015},
  eprint={http://arxiv.org/abs/1501.05818}
}

\bib{LacSpe}{article}{
   author={Lacey, Michael T.},
   author={Spencer, Scott},
   title={On entropy bumps for Calder\'on-Zygmund operators},
   journal={Concr. Oper.},
   volume={2},
   date={2015},
   pages={47--52}
}

\bib{NazRezTreVol2013}{article}{
   author={Nazarov, Fedor},
   author={Reznikov, Alexander},
   author={Treil, Sergei},
   author={Volberg, Alexander},
   title={A Bellman function proof of the $L^2$ bump conjecture},
   journal={J. Anal. Math.},
   volume={121},
   date={2013},
   pages={255--277}
}

\bib{Neu1983}{article}{
   author={Neugebauer, C. J.},
   title={Inserting $A_{p}$-weights},
   journal={Proc. Amer. Math. Soc.},
   volume={87},
   date={1983},
   number={4},
   pages={644--648}
}

\bib{Per1994}{article}{
   author={P{\'e}rez, C.},
   title={Weighted norm inequalities for singular integral operators},
   journal={J. London Math. Soc. (2)},
   volume={49},
   date={1994},
   number={2},
   pages={296--308}
}

\bib{Per1}{article}{
   author={P{\'e}rez, Carlos},
   title={On sufficient conditions for the boundedness of the
   Hardy-Littlewood maximal operator between weighted $L^p$-spaces with
   different weights},
   journal={Proc. London Math. Soc. (3)},
   volume={71},
   date={1995},
   number={1},
   pages={135--157}
}

\bib{Saw}{article}{
   author={Sawyer, Eric T.},
   title={A characterization of two weight norm inequalities for fractional
   and Poisson integrals},
   journal={Trans. Amer. Math. Soc.},
   volume={308},
   date={1988},
   number={2},
   pages={533--545}
}

\bib{TreVol}{article}{
   author={Treil, Sergei},
   author={Volberg, Alexander},
   title={Entropy conditions in two weight inequalities for singular
   integral operators},
   journal={Adv. Math.},
   volume={301},
   date={2016},
   pages={499--548}
}

\end{biblist}
\end{bibdiv}

\end{document}